\newtheorem{thm}{Theorem}[section]
\newtheorem{cor}[thm]{Corollary}
\newtheorem{lem}[thm]{Lemma}
\theoremstyle{definition}
\numberwithin{equation}{section}
\begin{document}
\title[Characterization of $p$-groups by sum of element orders ]
{Characterization of $p$-groups by sum of element orders}%
\author[S. M. Jafarian Amiri and Mohsen Amiri  ]{S. M. Jafarian Amiri and Mohsen Amiri }%
\address{}%
\email{}%
\email{}
\subjclass[2000]{20D15}
\keywords{ $p$-groups, element orders.}%
\thanks{}
\thanks{}
\maketitle
\begin{abstract}
Let $G$ be a finite group. Then we denote $\psi(G) = \sum_{x\in G}o(x)$ where $o(x)$ is the order of the element $x$ in $G$. In this paper we characterize some finite $p$-groups ($p$ a prime) by $\psi$ and their orders.


\end{abstract}
\maketitle
\section{\bf Introduction and Main results}
In what follows all groups are finite and $p$ is a prime.

Given a  finite group $G$, let  $\psi(H) = \sum_{x\in H} o(x)$ for $H\subseteq G$,
where as usual, $o(x)$ is the order of the element $x$. In this
note, we  ask what information about  some classes of $p-$groups $G$
can be recovered if we know both $\psi(G)$ and $|G|$.  The
starting point for the function $\psi$ is given by the paper \cite{A.J.I} which
investigates the maximum of $\psi$ among all groups of the same
order. In \cite{AJ} the authors determined the structure of the groups
which have the minimum sum of the element orders on all groups of the same order.


Let $CP_2$ be the class of finite groups $G$ such that $o(xy)\leq max\{o(x), o(y)\}$ for  all $ x,y \in G$. We denote $\Omega_{i}(G)=<\{x\in G| \ x^{p^{i}}=1\}>$ for all $i\in \mathbb{N}$. Now we state the first main result as follows.

 \begin{thm}
Suppose that  $P$ and $Q$ are contained  in  $CP_{2}$ of the same order $p^n$. Then the following statements are equivalent:
   \begin{enumerate}
     \item  $\psi(P)=\psi(Q)$.
     \item $|\Omega_{i}(P)|=|\Omega_{i}(Q)|$ for all $i\in \mathbb{N}$.
     \item $\psi(\Omega_{i}(P))=\psi(\Omega_{i}(Q))$ for all $i\in \mathbb{N}$.
   \end{enumerate}
\end{thm}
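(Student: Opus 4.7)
My plan is to exploit the $CP_{2}$ hypothesis to realize each $\Omega_{i}(G)$ as an actual subgroup and then reduce everything to a uniqueness statement about base-$p$ expansions. First I would observe that for $G\in CP_{2}$ the set $\{x\in G:x^{p^{i}}=1\}$ is closed under products (by the defining inequality $o(xy)\le\max\{o(x),o(y)\}$) and hence already equals $\Omega_{i}(G)$. So a $CP_{2}$-group of order $p^{n}$ has a strictly ascending chain
\[
1=\Omega_{0}(G)\subsetneq\Omega_{1}(G)\subsetneq\cdots\subsetneq\Omega_{e}(G)=G,\qquad |\Omega_{i}(G)|=p^{k_{i}},
\]
with $0=k_{0}<k_{1}<\cdots<k_{e}=n$ and $p^{e}=\exp(G)$; the strict ascent comes from the fact that any element of order $p^{m}$ with $m>i$ has a power of order exactly $p^{i+1}$.

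The key computational step will be the Abel-summation identity
\[
\psi(G)=\sum_{i=0}^{e}p^{i}\bigl(|\Omega_{i}(G)|-|\Omega_{i-1}(G)|\bigr)=p^{n+e}-(p-1)\sum_{i=0}^{e-1}p^{\,i+k_{i}}.
\]
Because the $k_{i}$ are strictly increasing, so are the exponents $i+k_{i}$, and the right-hand sum $S(G):=\sum p^{\,i+k_{i}}$ is a sum of $e$ pairwise distinct powers of $p$, each at most $p^{n+e-2}$. With this identity in hand, $(2)\Rightarrow(3)$ and $(3)\Rightarrow(1)$ become essentially formal: the same identity applied to the $CP_{2}$-subgroup $\Omega_{i}(G)$ expresses $\psi(\Omega_{i}(G))$ purely in terms of $k_{0},\dots,k_{i}$, and taking $i=n$ in (3) forces $\Omega_{n}(G)=G$ and so gives $\psi(P)=\psi(\Omega_{n}(P))=\psi(\Omega_{n}(Q))=\psi(Q)$.

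The substantive direction is $(1)\Rightarrow(2)$, and my strategy is to show that $\psi(G)$ together with $n$ already pins down both $e$ and the entire sequence $(k_{i})$. Bounding $1\le S(G)\le(p^{n+e-1}-1)/(p-1)$ places $\psi(G)$ in the interval $I_{e}:=\bigl[(p-1)p^{n+e-1}+1,\,p^{n+e}\bigr)$, and these intervals are pairwise disjoint because $(p-1)p^{n+e}+1>p^{n+e}$ for every prime $p$. This separation is the step I expect to be the main obstacle, since the inequality is tightest at $p=2$ and must be handled cleanly so that $\psi$ uniquely selects $e$; once $e$ is identified, $S(G)=(p^{n+e}-\psi(G))/(p-1)$ is determined, and the uniqueness of the base-$p$ expansion of $S(G)$ together with the strict monotonicity of $i+k_{i}$ lets me read off each exponent $i+k_{i}$ and so each $k_{i}$, which gives $|\Omega_{i}(P)|=|\Omega_{i}(Q)|$ for all $i$.
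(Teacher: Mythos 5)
Your argument is correct, and its substantive part, $(1)\Rightarrow(2)$, takes a genuinely different route from the paper's. The paper proves $(1)\Rightarrow(2)$ by induction on $|P|$: it establishes the recursion $\psi(P)=1-p+p^{r+1}\psi(P/\Omega_{1}(P))$ with $|\Omega_{1}(P)|=p^{r}$, deduces $|\Omega_{1}(P)|=|\Omega_{1}(Q)|$ from a congruence modulo $p^{t+1}$, and then needs two auxiliary facts (that $\Omega_{i}(P/\Omega_{1}(P))=\Omega_{i+1}(P)/\Omega_{1}(P)$ and that the quotient is again in $CP_{2}$) in order to descend. You instead never pass to a quotient: the Abel-summation identity $\psi(G)=p^{n+e}-(p-1)\sum_{i=0}^{e-1}p^{\,i+k_{i}}$ is correct, the exponents $i+k_{i}$ are strictly increasing and bounded by $n+e-2$, the intervals $I_{e}$ are indeed pairwise disjoint even at $p=2$ (since $2^{n+e}+1>2^{n+e}$), and uniqueness of the base-$p$ expansion of $S(G)$ recovers each $k_{i}$ from the sorted list of exponents. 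This one-shot argument proves something slightly stronger than the theorem, namely an explicit procedure by which $\psi(G)$ and $|G|$ determine the entire sequence $(|\Omega_{i}(G)|)_{i}$; your $(3)\Rightarrow(1)$ (take $i=n$) is also simpler than the paper's $(3)\Rightarrow(2)$, which peels off top layers via its Lemma on $\Omega_{m-1}$. What the paper's quotient machinery buys in exchange is reusability: the same lemmas give the authors their characterization of abelian $p$-groups by $\psi$ (their Corollary on $P\cong Q$), which your filtration-only argument does not directly yield. The only points to write out carefully in a full version are the disjointness of the intervals at $p=2$, which you have correctly flagged, and the observation that $\Omega_{i}(G)$ is itself a $CP_{2}$-group with $\Omega_{j}(\Omega_{i}(G))=\Omega_{j}(G)$ for $j\le i$, which is what makes the same identity apply to $\psi(\Omega_{i}(G))$ in your $(2)\Rightarrow(3)$.
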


Note that the class $CP_2$ of $p$-groups is more large than the class of abelian $p$-groups, regular $p-$groups ( See Theorem 3.14 of \cite{Su},II, page 47) and $p-$groups whose subgroup lattices are modular ( see Lemma 2.3.5 of \cite{Sch}). Moreover by the main theorem in \cite{W}, we infer that powerful $p-$groups for $p$ odd also belong to $CP_{2}$.

The following is the second main result.
\begin{thm} Let  $P$ and $Q$ be two  finite  $p-$groups of the same order and  $\Omega_{m-1}(P)\neq P$, where $exp(P)=p^{m}$. If $exp(P)>exp(Q)$,
    then $\psi(P)>\psi(Q)$.
\end{thm}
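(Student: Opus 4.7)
The plan is to obtain a lower bound on $\psi(P)$ strong enough to beat a trivial exponent-based upper bound on $\psi(Q)$. Set $H := \Omega_{m-1}(P)$ and write $|P|=|Q|=p^{n}$ and $|H|=p^{s}$; the hypothesis $H\neq P$ forces $s\leq n-1$.

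The key observation is that every element of $P\setminus H$ has order exactly $p^{m}$. Indeed, if $x\in P$ satisfies $x^{p^{m-1}}=1$ then $x$ lies in the generating set of $\Omega_{m-1}(P)$, so $x\in H$; combining this with $\exp(P)=p^{m}$ rules out any intermediate order $1,p,\dots ,p^{m-1}$ outside $H$. Consequently
\[
\psi(P)\;=\;\psi(H)+(p^{n}-p^{s})\,p^{m}\;\geq\;1+(p^{n}-p^{s})\,p^{m}.
\]
Meanwhile, since $\exp(Q)<p^{m}$, every non-identity element of $Q$ has order at most $p^{m-1}$, so
\[
\psi(Q)\;\leq\;1+(p^{n}-1)\,p^{m-1}.
\]

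It therefore suffices to verify $(p^{n}-p^{s})\,p^{m}>(p^{n}-1)\,p^{m-1}$, which after clearing $p^{m-1}$ becomes $p^{n}(p-1)>p^{s+1}-1$; since $s+1\leq n$ and $p\geq 2$, this is immediate from $p^{n}(p-1)\geq p^{n}\geq p^{s+1}>p^{s+1}-1$. The argument is essentially obstruction-free; the only conceptual point is recognizing that the hypothesis $\Omega_{m-1}(P)\neq P$ — strictly stronger than $\exp(P)=p^{m}$, and not implied by it — is precisely what is needed to guarantee a large supply of elements of maximal order in $P$. Once that observation is isolated the theorem collapses to the elementary inequality above, which holds with room to spare for every prime.
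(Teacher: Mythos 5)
Your proof is correct and follows essentially the same route as the paper: the identity $\psi(P)=\psi(\Omega_{m-1}(P))+(p^{n}-p^{s})p^{m}$ is exactly the paper's Lemma 2.1, and the comparison with the trivial bound $\psi(Q)\leq 1+(p^{n}-1)p^{m-1}$ coming from $\exp(Q)\leq p^{m-1}$ is the same final step, just with the arithmetic carried out slightly more explicitly. No issues.
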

In general, it is not true that if $P$ and $Q$ are $p$-groups of the same order such that $exp(P)>exp(Q)$
, then $\psi(P)>\psi(Q)$. For example consider $Q=(C_{4})^{4}$ and $P=D_{16}\times (C_{2})^{4}$. The authors
would like to thank Prof. E. Khukhru for giving this example.

But if $exp(P)=exp(Q)$, then we have the following.
\begin{thm} Let  $P$ and $Q$ belong to $CP_{2}$ of the same order and the same exponent $p^{m}$. Also suppose that  $|\Omega_{m-i}(P)|=|\Omega_{m-i}(Q)|$ for $i=1,2,..,t$. If $|\Omega_{m-t-1}(P)|<|\Omega_{m-t-1}(Q)|$, then $\psi(P)>\psi(Q)$.
\end{thm}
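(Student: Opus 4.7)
The plan is to exploit the fact that, for any $G\in CP_{2}$, the set $\{x\in G:x^{p^{i}}=1\}$ coincides with the subgroup $\Omega_{i}(G)$: closure under multiplication is immediate from $o(xy)\le\max\{o(x),o(y)\}$, and closure under inversion is automatic. Writing $a_{i}(G)=|\Omega_{i}(G)|$ with the convention $a_{-1}(G)=0$, the elements of order exactly $p^{i}$ number $a_{i}(G)-a_{i-1}(G)$, so
\[
\psi(G)=\sum_{i=0}^{m} p^{i}\bigl(a_{i}(G)-a_{i-1}(G)\bigr).
\]
A single Abel summation, using $a_{m}(G)=|G|$, rewrites this as
\[
\psi(G)=p^{m}|G|-(p-1)\sum_{i=0}^{m-1}p^{i}\,a_{i}(G).
\]

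Since $P$ and $Q$ share the same order and exponent, subtracting the two identities yields
\[
\psi(P)-\psi(Q)=(p-1)\sum_{i=0}^{m-1}p^{i}\bigl(a_{i}(Q)-a_{i}(P)\bigr).
\]
By hypothesis the summands with $i\in\{m-1,m-2,\dots,m-t\}$ vanish, so setting $k:=m-t-1$ one is left with
\[
\frac{\psi(P)-\psi(Q)}{p-1}=p^{k}\bigl(a_{k}(Q)-a_{k}(P)\bigr)+\sum_{i=0}^{k-1}p^{i}\bigl(a_{i}(Q)-a_{i}(P)\bigr).
\]
Both $a_{k}(P)$ and $a_{k}(Q)$ are powers of $p$ with $a_{k}(P)<a_{k}(Q)$, which forces $a_{k}(Q)\ge p\,a_{k}(P)$; thus the leading term is bounded below by $p^{k}(p-1)\,a_{k}(P)$.

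The technical core is absorbing the possibly negative remainder. Using only the chain $\Omega_{i}(P)\le\Omega_{k}(P)$ for $i\le k$ (which holds since $x^{p^{i}}=1$ implies $x^{p^{k}}=1$) together with the trivial bound $a_{i}(Q)\ge 1$ gives
\[
\sum_{i=0}^{k-1}p^{i}\bigl(a_{i}(Q)-a_{i}(P)\bigr)\;\ge\;-\bigl(a_{k}(P)-1\bigr)\cdot\frac{p^{k}-1}{p-1}.
\]
Combining the two estimates reduces the theorem to the arithmetic inequality
\[
p^{k}(p-1)^{2}\,a_{k}(P)\;>\;\bigl(a_{k}(P)-1\bigr)\bigl(p^{k}-1\bigr),
\]
and since $(p-1)^{2}\ge 1$ the left side is at least $p^{k}a_{k}(P)$, whence the difference is at least $a_{k}(P)+p^{k}-1>0$. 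I expect the main obstacle to be exactly this tail estimate: the inequality barely holds at $p=2$, and the whole proof is driven by the single multiplicative gap $a_{k}(Q)\ge p\,a_{k}(P)$ coming from the $p$-group structure, with no room to exploit finer information about $P$ or $Q$ at lower levels.
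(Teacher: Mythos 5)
Your proof is correct and follows essentially the same route as the paper's: both decompose $\psi$ along the $\Omega_i$-filtration (the paper via repeated use of its Lemma 2.1, you via the equivalent Abel-summed identity $\psi(G)=p^m|G|-(p-1)\sum_{i<m}p^i|\Omega_i(G)|$), cancel the matching top layers, and show that the jump $|\Omega_{m-t-1}(Q)|\ge p\,|\Omega_{m-t-1}(P)|$ dominates a crudely bounded tail. The only cosmetic difference is the tail estimate — the paper bounds the whole of $\psi(\Omega_{m-t-1}(Q))$ by $p^{m-t-1}|\Omega_{m-t-1}(Q)|$, while you bound each lower layer by $|\Omega_i(P)|\le|\Omega_{m-t-1}(P)|$ and $|\Omega_i(Q)|\ge 1$ — and both close the same arithmetic gap.
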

As an application of Theorems 1.1 and 1.2 we have the following.
\begin{thm} Let  $P$ and $Q$ belong to $CP_{2}$ of the same order $p^{n}$. Then $\psi(P)=\psi(Q)$ if and only if  there is a bijection $f:P\rightarrow Q$ such that $o(f(x))=o(x)$ for all $x\in P$.
\end{thm}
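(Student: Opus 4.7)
The ``if'' direction is immediate: an order-preserving bijection $f:P\to Q$ lets us rewrite $\sum_{x\in P}o(x)$ as $\sum_{y\in Q}o(y)$, so $\psi(P)=\psi(Q)$. For the converse, assume $\psi(P)=\psi(Q)$ and plan to produce the bijection by matching elements of equal order level by level.

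First, apply Theorem~1.1, specifically the implication $(1)\Rightarrow(2)$, to conclude $|\Omega_{i}(P)|=|\Omega_{i}(Q)|$ for every $i$. Next, exploit membership in $CP_{2}$ to identify $\Omega_{i}(P)$ with the literal \emph{set} of elements of $P$ annihilated by $p^{i}$: if $x,y\in P$ satisfy $x^{p^{i}}=y^{p^{i}}=1$, then $o(xy)\leq\max\{o(x),o(y)\}\leq p^{i}$, so $\{z\in P:z^{p^{i}}=1\}$ is closed under multiplication (and trivially under inversion), hence is a subgroup; since it contains all generators of $\Omega_{i}(P)$, the two coincide. The same reasoning applies in $Q$. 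Consequently the number of elements of $P$ of order exactly $p^{i}$ equals $|\Omega_{i}(P)|-|\Omega_{i-1}(P)|$ (with the convention $\Omega_{0}(P)=\{1\}$), and by the first step this agrees with the corresponding count in $Q$.

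Finally, for each $i$ choose any bijection $f_{i}$ from the elements of order $p^{i}$ in $P$ to those in $Q$, and patch the $f_{i}$ together to form the required $f:P\to Q$; by construction $o(f(x))=o(x)$ for every $x\in P$. The only delicate point in the whole argument is the passage from the generated subgroup $\Omega_{i}(P)$ to the literal set of elements killed by $p^{i}$, which is precisely where the $CP_{2}$ hypothesis does real work; once this identification is in place, Theorem~1.1 converts $\psi$-equality into equality of order-profiles, and the construction of $f$ is purely combinatorial. Theorem~1.2 is not invoked directly here, only (tacitly) through the proof of Theorem~1.1.
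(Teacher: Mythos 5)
Your proposal is correct, and it takes a somewhat more direct route than the paper. You invoke Theorem~1.1 only through $(1)\Rightarrow(2)$, then use the $CP_{2}$ hypothesis (in effect re-deriving part (1) of the cited characterization, Theorem~\ref{a}) to identify each $\Omega_{i}(P)$ with the literal set $\{z\in P: z^{p^{i}}=1\}$, so that the number of elements of exact order $p^{i}$ is $|\Omega_{i}(P)|-|\Omega_{i-1}(P)|$; the equalities $|\Omega_{i}(P)|=|\Omega_{i}(Q)|$ then give identical order profiles and the bijection is assembled level by level in one pass. The paper instead argues by induction on $n$: it uses Theorem~\ref{i} to force $\exp(P)=\exp(Q)=p^{m}$, Theorem~\ref{f} to get $\psi(\Omega_{m-1}(P))=\psi(\Omega_{m-1}(Q))$ and $|\Omega_{m}(P)|-|\Omega_{m-1}(P)|=|\Omega_{m}(Q)|-|\Omega_{m-1}(Q)|$, obtains an order-preserving bijection on $\Omega_{m-1}$ from the inductive hypothesis, and patches it with an arbitrary bijection on the top layer (where every element has order $p^{m}$, again by the $CP_{2}$ set identification). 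Your argument is essentially the paper's induction unrolled; what it buys is the elimination of the induction and of the separate appeal to Theorem~1.2 for exponent control (equal $\Omega_{i}$-sizes already force equal exponents), and it makes explicit the one place where $CP_{2}$ does real work, namely that $\Omega_{i}$ is a set of elements and not merely a generated subgroup --- a point the paper's patching step uses tacitly when it asserts $o(h(x))=o(x)$ on the complement of $\Omega_{m-1}$.
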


\section{ Proof of the main results    }
\begin{lem} \label{h} Let
  $P$ be a finite  $p-$group, $exp(P)=p^{m}$ and $M=\Omega_{m-1}(P)\neq P$. Then $\psi(P)=\psi(M)+|M|p^{m}(\frac{|P|}{|M|}-1)$.
\end{lem}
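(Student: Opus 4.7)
The plan is to split $\psi(P)$ into contributions from $M$ and from its complement:
\[
\psi(P)=\sum_{x\in M}o(x)+\sum_{x\in P\setminus M}o(x)=\psi(M)+\sum_{x\in P\setminus M}o(x).
\]
So the only task is to evaluate the second sum, which I would do by showing that \emph{every} element $x\in P\setminus M$ has order exactly $p^m$.

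For this, I would argue directly from the definition of $\Omega_{m-1}(P)$. Any $x\in P$ with $x^{p^{m-1}}=1$ is one of the generators of $\Omega_{m-1}(P)=M$, hence lies in $M$. Equivalently, if $x\in P\setminus M$, then $x^{p^{m-1}}\neq 1$, so $o(x)>p^{m-1}$. On the other hand, $\exp(P)=p^m$ forces $o(x)\mid p^m$, so $o(x)$ must equal $p^m$. The hypothesis $M\neq P$ guarantees this regime is non-vacuous.

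With this observation the second sum is a constant times a cardinality:
\[
\sum_{x\in P\setminus M}o(x)=p^m\,|P\setminus M|=p^m\bigl(|P|-|M|\bigr)=|M|\,p^m\!\left(\frac{|P|}{|M|}-1\right),
\]
and substituting back yields the claimed identity $\psi(P)=\psi(M)+|M|p^m(|P|/|M|-1)$.

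There is no real obstacle: the lemma is a direct computation once one notices that the defining generators of $\Omega_{m-1}(P)$ already account for all elements of $P$ of order strictly less than $p^m$, so passing from the generating set to the subgroup $M$ does not enlarge the set of ``low-order'' elements that need to be separated out.
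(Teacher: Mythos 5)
Your proof is correct and follows essentially the same route as the paper: the paper decomposes $P$ into cosets of $M$ via a transversal, but the substance in both cases is the observation that every element outside $M=\Omega_{m-1}(P)$ has order exactly $p^{m}$, giving the extra term $p^{m}(|P|-|M|)=|M|p^{m}\bigl(\frac{|P|}{|M|}-1\bigr)$. In fact you spell out the justification (any $x$ with $x^{p^{m-1}}=1$ is among the generators of $M$, hence lies in $M$) that the paper leaves implicit.
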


\begin{proof} Suppose that $X$ is a left transversal to $M$ in $P$ containing identity element.
For all   $m\in M$ and $1\neq x\in X$, we have  $o(xm)= p^{m}$. Therefore
$\psi(xM)=|M|p^n$ for all $1\neq x \in X$.
This completes the proof.
\end{proof}

\begin{thm}\label{i} Let  $P$ and $Q$ be two  finite  $p-$groups of  order $p^{n}$ and $exp(P)=p^{m}$. If $\Omega_{m-1}(P)\neq P$ and  $exp(P)>exp(Q)$,  then $\psi(P)>\psi(Q)$.
\end{thm}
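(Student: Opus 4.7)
The plan is to use Lemma 2.1 to obtain a lower bound on $\psi(P)$ that beats the trivial upper bound on $\psi(Q)$ coming from the exponent hypothesis. Write $M = \Omega_{m-1}(P)$, so $M \neq P$ by assumption and hence $|M| \leq p^{n-1}$. By Lemma 2.1,
\[
\psi(P) = \psi(M) + |M|p^{m}\Bigl(\tfrac{|P|}{|M|} - 1\Bigr).
\]
Since $\psi(M) \geq |M|$ (every order is at least $1$), this yields
\[
\psi(P) \geq |M| + p^{m}\bigl(|P| - |M|\bigr) = p^{m}|P| - (p^{m}-1)|M|.
\]
Using $|M| \leq p^{n-1}$ and $|P| = p^{n}$, I would simplify this to
\[
\psi(P) \geq p^{n+m} - (p^{m}-1)p^{n-1} = p^{n+m-1}(p-1) + p^{n-1}.
\]

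Next I would bound $\psi(Q)$ from above. Since $\exp(Q) < \exp(P) = p^{m}$, every element of $Q$ has order at most $p^{m-1}$, so
\[
\psi(Q) \leq |Q|\, p^{m-1} = p^{n+m-1}.
\]
Subtracting the two bounds,
\[
\psi(P) - \psi(Q) \geq p^{n+m-1}(p-1) + p^{n-1} - p^{n+m-1} = p^{n+m-1}(p-2) + p^{n-1}.
\]
For $p \geq 3$ both terms are positive, and for $p = 2$ the right side equals $p^{n-1} > 0$, so $\psi(P) > \psi(Q)$ in every case.

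There isn't a real obstacle in this argument, only a bookkeeping point: one must verify that the crude estimate $\psi(M) \geq |M|$, combined with the worst case $|M| = p^{n-1}$ (index exactly $p$), still produces a strict inequality when $p = 2$, since the factor $(p-2)$ vanishes there. The surviving term $p^{n-1}$ handles this case, which is why the lower bound $\psi(M) \geq |M|$ (rather than $\psi(M) \geq 1$) is used.
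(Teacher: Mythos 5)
Your proof is correct and follows essentially the same route as the paper: both apply Lemma 2.1 with $M=\Omega_{m-1}(P)$ to bound $\psi(P)$ below by roughly $p^{n+m-1}(p-1)$ and compare with the trivial upper bound $\psi(Q)\leq p^{n+m-1}$ coming from $\exp(Q)\leq p^{m-1}$. The only (minor) difference is that you use $\psi(M)\geq |M|$ to keep the extra term $p^{n-1}$ and settle $p=2$, whereas the paper instead relies on the strictness of $\psi(M)>0$ and of $\psi(Q)<p^{n}p^{m-1}$.
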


\begin{proof}
Let $M=\Omega_{m-1}(P)$. Then
\begin{eqnarray*}
\psi(P)&=&\psi(M)+|M|p^{m}(\frac{|P|}{|M|}-1)\\
&>&|M|p^{m}(\frac{|P|}{|M|}-1)\\
&=&p^{m}(|P|-|M|)\\
&=&p^{m}(p^{n}-|M|)\\
&\geq& p^{m}(p^{n}-p^{n-1})\\
&\geq&p^{n}p^{m-1}.
\end{eqnarray*}

Since $exp(Q)\leq p^{m-1}$, we have $\psi(Q)<p^{n}p^{m-1}<\psi(P)$.
\end{proof}
We observe that if a finite group $G$ belongs to $CP_{2}$, then for every $x,y\in G$ satisfying $o(x)\neq  o(y)$ we have $o(xy)=max\{o(x),o(y)\}$.

We shall need the following  theorem about the groups belonging to $CP_{2}$.

\begin{thm}( See Theorem D in \cite{Deb}) \label{a} A finite group $G$ is contained in $CP_{2}$ if and only if one of the following statements holds:

\begin{enumerate}
  \item $G$ is a $p-$group and $\Omega_{n}(G)=\{x\in G\ | \ x^{p^{n}}=1\}$.
  \item $G$ is a Frobenius group of order $p^{\alpha}q^{\beta}$, $p<q$, with kernel $F(G)$ of order $p^{\alpha}$ and cyclic complement.
\end{enumerate}

\end{thm}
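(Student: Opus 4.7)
The plan is to prove both directions of the equivalence, with the forward direction (assuming $G \in CP_2$) being the substantive one. For the reverse direction, I would verify $CP_2$ directly in each of the two listed cases. For case (1), given $x, y$ in a $p$-group $G$ with $\max\{o(x), o(y)\} = p^n$, both $x$ and $y$ lie in $\Omega_n(G)$; the assumption that $\Omega_n(G) = \{z \in G \mid z^{p^n} = 1\}$ means this set is a subgroup, so $xy$ belongs to it and hence $o(xy) \mid p^n$. For case (2), I would split into products within $F(G)$ (which is a $p$-group to which case (1) applies, a fact I would derive as part of the forward direction), products within a single Frobenius complement (cyclic of $q$-power order, so commutative and straightforward), and mixed products; the Frobenius structure controls the last case because a nontrivial product of a kernel element and a complement element is conjugate to a complement element and thus a $q$-element.

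For the forward direction, the first step is to show that $G \in CP_2$ forces every element of $G$ to have prime power order. Indeed, if $z \in G$ has $o(z) = mn$ with $\gcd(m, n) = 1$ and $m, n > 1$, write $z = z_1 z_2$ with $z_1, z_2$ commuting of orders $m$ and $n$; then $o(z_1 z_2) = mn > \max\{m, n\}$, contradicting $CP_2$. From this I would obtain two cases according to whether $|G|$ is a prime power.

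If $G$ is a $p$-group, the characterization in (1) is immediate: pick $x, y$ with $x^{p^n} = y^{p^n} = 1$; then $\max\{o(x), o(y)\} \leq p^n$, so $CP_2$ gives $o(xy) \leq p^n$, i.e.\ $(xy)^{p^n} = 1$. Hence the set $\{z \mid z^{p^n} = 1\}$ is closed under multiplication in the finite group $G$, so it coincides with the subgroup $\Omega_n(G)$. The harder case is $|G|$ divisible by at least two primes, and this is where I expect the main obstacle. Since every element has prime power order, $G$ is a $CP$-group in the sense of Higman, and standard structure theory (or Brauer--Suzuki-type arguments) shows such $G$ must be a Frobenius group whose kernel and complement are each prime-power groups. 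To conclude, I would need (i) that exactly two primes $p < q$ divide $|G|$, (ii) that the kernel is the Sylow $p$-subgroup, and (iii) that the Frobenius complement is cyclic. Items (i) and (ii) follow by combining Higman's classification with Thompson's theorem that Frobenius kernels are nilpotent; item (iii) is the delicate point and is forced by $CP_2$ because a noncyclic Sylow $q$-subgroup would contain two elements $u, v$ of order $q$ generating a noncyclic subgroup, producing a product $uv$ whose order in the complement exceeds $q$ in a way incompatible with $CP_2$ after a careful case analysis.

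The main obstacle, as noted, is reducing a general $CP_2$-group with two prime divisors to a Frobenius group with cyclic complement; the $CP$-group reduction is the conceptual heart, while ruling out noncyclic $q$-Sylows is the sharpest quantitative use of the $CP_2$ hypothesis. The $p$-group half, by contrast, is essentially a one-line argument once the definition of $\Omega_n$ is unpacked.
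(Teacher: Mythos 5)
This statement is not proved in the paper at all; it is quoted verbatim from Theorem~D of \cite{Deb}, so there is no internal argument to compare yours against. Judged on its own terms, your sketch is correct and essentially complete for the $p$-group half (the observation that $CP_2$ makes $\{x : x^{p^n}=1\}$ multiplicatively closed, and conversely), and the reduction ``every element has prime power order'' via the coprime decomposition $z=z_1z_2$ is the right first move. The problems lie in the two-prime half, in both directions.

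In the reverse direction your case analysis for (2) is incomplete in a way that matters. You treat products inside $F(G)$, products inside a single complement, and kernel-times-complement products, but you omit the case of two $q$-elements lying in \emph{different} complements: their product can be a nontrivial element of the kernel, and such products sweep out essentially all of $F(G)$. Concretely, take $F=(C_4)^2$ with the fixed-point-free order-$3$ automorphism given by a matrix $M$ with $M^2+M+I=0$ over $\mathbb{Z}/4$; then $G=F\rtimes C_3$ is a Frobenius group of order $2^4\cdot 3$ with cyclic complement, satisfying (2) as literally stated, yet $h\cdot(fh^{-1})=Mf$ can have order $4>3$ for $h$ of order $3$. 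So statement (2) as quoted in this paper is not actually sufficient for $CP_2$ (the original Theorem~D imposes more on the kernel, in particular a bound on its exponent relative to $q$), and your proof cannot close this case. Your treatment of products inside $F(G)$ is also circular: in the reverse direction you may not invoke ``a fact derived in the forward direction'' to know that $F(G)$ satisfies (1). In the forward direction, the step ``standard structure theory shows an EPPO group with two prime divisors is Frobenius'' is false as stated: $S_4$ and $A_5$ are EPPO groups and neither is Frobenius, so you must use the $CP_2$ hypothesis itself to exclude $2$-Frobenius and nonsolvable configurations, and the sketch gives no indication of how. By contrast, the point you single out as delicate --- cyclicity of the complement --- is automatic: a Frobenius complement has cyclic Sylow subgroups for odd primes, and here $q>p\geq 2$ forces $q$ odd. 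What genuinely needs the $CP_2$ inequality is that the kernel is the Sylow subgroup for the \emph{smaller} prime and that its exponent is controlled; those are the steps your outline does not address.
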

In the sequel assume that $P$ and $Q$ are $p$-groups belonging to $CP_2$.

\begin{lem}\label{b}
If  $|\Omega_{1}(P)|=p^r$, then $\psi(P)=1-p+p^{r+1}\psi(\frac{P}{\Omega_{1}(P)})$.
\end{lem}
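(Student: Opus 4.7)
The plan is to compute $\psi(P)$ by partitioning $P$ into the cosets of $N := \Omega_1(P)$ and showing that on each non-identity coset the contribution has a clean form. First I would note that by Theorem \ref{a}, since $P \in CP_2$, the subgroup $N$ coincides with the set $\{y \in P : y^p = 1\}$; in particular every non-identity element of $N$ has order exactly $p$, so
$$\psi(N) = 1 + p(p^r - 1) = 1 - p + p^{r+1}.$$

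Next I would establish the central claim: for every $x \in P \setminus N$ and every $n \in N$ one has $o(xn) = p \cdot o(\bar x)$, where $\bar x$ denotes the image of $x$ in $P/N$. The argument has two parts. Since $x \notin N$ forces $o(x) \geq p^2$ while $o(n) \leq p$, the orders of $x$ and $n$ differ, and the observation recorded just before Theorem \ref{a} gives $o(xn) = \max\{o(x), o(n)\} = o(x)$. To pin down $o(x)$ itself, write $o(\bar x) = p^k$, so that $k \geq 1$ is the least integer with $x^{p^k} \in N$. Applying Theorem \ref{a} once more, $x^{p^k} \in N$ forces $x^{p^{k+1}} = 1$, hence $o(x) \leq p^{k+1}$; on the other hand $x^{p^k} \neq 1$ by minimality of $k$, so $o(x) = p^{k+1} = p \cdot o(\bar x)$.

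With this claim in hand the calculation becomes mechanical. The coset $xN$ contributes $|N| \cdot p \cdot o(\bar x) = p^{r+1} o(\bar x)$ for each $x \notin N$, so summing over the non-identity cosets of $N$ yields
$$\psi(P) = \psi(N) + \sum_{\bar x \neq 1} p^{r+1} o(\bar x) = (1 - p + p^{r+1}) + p^{r+1}\bigl(\psi(P/N) - 1\bigr),$$
which simplifies to $\psi(P) = 1 - p + p^{r+1}\psi(P/N)$, as required.

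I anticipate the main obstacle to be the step $o(x) = p \cdot o(\bar x)$ for $x \notin N$: without the $CP_2$ hypothesis, used through Theorem \ref{a} to identify $N$ as exactly the set of elements killed by $p$ (not merely the subgroup they generate), the implication $x^{p^k} \in N \Rightarrow x^{p^{k+1}} = 1$ can fail, and one cannot conclude that the order jumps by exactly a factor of $p$ when passing to the quotient.
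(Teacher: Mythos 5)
Your proof is correct and follows essentially the same route as the paper: decompose $P$ into cosets of $N=\Omega_1(P)$, use the $CP_2$ property (via Theorem \ref{a}) to show each non-identity coset contributes $|N|\cdot p\cdot o(\bar x)=p^{r+1}o(\bar x)$, and sum. Your derivation of $o(x)=p\cdot o(\bar x)$ via the minimal $k$ with $x^{p^k}\in N$ is just a more explicit version of the paper's observation that $\langle x\rangle\cap N\neq 1$ has order exactly $p$.
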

\begin{proof}

Suppose that $\Omega_{1}(P)=N$. Then we have $<x>\cap N\neq 1$ for all $1\neq x \in P$, Since   $<x^{\frac{o(x)}{p}}>$ is a subgroup of $<x>\cap N$ . Let  $X$ be a left  transversal to $N$ in  $P$ such that $1\in X$. Suppose that  $1\neq x\in X$. Then  $o(x)\geq p^{2}$ since $N$ does not contain $x$. If  $y\in N$, then  by Theorem \ref{a} part one  $exp(N)=p$  and so we have
 $o(xy)=o(x)$. This implies that  $$\psi(P)=\sum_{x\in X}\psi(xN)=\psi(N)+|N|\sum_{1\neq x\in X}o(x)$$ If $1\neq x\in X$, then $<x>\cap N\neq 1$ which follows that $o(x)=po(xN)$. Hence

\begin{eqnarray*}
\psi(P)&=&\psi(N)+|N|\sum_{1\neq x\in X}o(x)\\
&=&\psi(N)+|N|p\sum_{1\neq x\in X}o(x N)\\
&=&\psi(N)+|N|p(\psi(\frac{P}{N})-1).
\end{eqnarray*}

 Since  $|N|=p^{r}$, we have  $\psi(N)=p^{r+1}-p+1$, which  completes the proof.
\end{proof}

\begin{lem}\label{c} If  $\psi(P)=\psi(Q)$, then $|\Omega_{1}(P)|=|\Omega_{1}(Q)|$.
\end{lem}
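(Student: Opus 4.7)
The plan is to show that $|\Omega_1(P)|$ is determined by $\psi(P)$ alone (and likewise $|\Omega_1(Q)|$ by $\psi(Q)$), by reading it off as a $p$-adic valuation via Lemma~\ref{b}. Write $|\Omega_1(P)| = p^r$ and $|\Omega_1(Q)| = p^s$. Rearranging the identity of Lemma~\ref{b} yields
\[
\psi(P) + p - 1 = p^{r+1}\,\psi\bigl(P/\Omega_1(P)\bigr), \qquad \psi(Q) + p - 1 = p^{s+1}\,\psi\bigl(Q/\Omega_1(Q)\bigr),
\]
and both identities remain valid in the degenerate case $\Omega_1(P) = P$ (or $\Omega_1(Q) = Q$), with the quotient factor equal to $\psi(1) = 1$.

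I would then invoke the elementary fact that $\psi(G) \equiv 1 \pmod{p}$ for every finite $p$-group $G$: the identity contributes $1$, while every other element contributes an order that is a positive power of $p$. Applied to the quotients $P/\Omega_1(P)$ and $Q/\Omega_1(Q)$ (which are $p$-groups without any further hypothesis), this ensures that $\psi(P/\Omega_1(P))$ and $\psi(Q/\Omega_1(Q))$ are coprime to $p$. Consequently the $p$-adic valuation of $\psi(P) + p - 1$ equals exactly $r + 1$, and the $p$-adic valuation of $\psi(Q) + p - 1$ equals exactly $s + 1$. The hypothesis $\psi(P) = \psi(Q)$ then forces $r = s$.

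There is no serious obstacle: the essential work is done by Lemma~\ref{b}, and what remains is just reading off a $p$-adic valuation. The only point deserving attention is the additive constant $1 - p$, which itself has $p$-adic valuation $0$ and so must be moved to the left-hand side before valuations are compared; once this is noted, the conclusion $|\Omega_1(P)| = |\Omega_1(Q)|$ is immediate. Note also that the argument does not require $|P| = |Q|$ nor that the quotients $P/\Omega_1(P)$, $Q/\Omega_1(Q)$ remain in $CP_2$.
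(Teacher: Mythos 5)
Your proposal is correct and follows essentially the same route as the paper: both rest on Lemma~\ref{b} together with the observation that $\psi$ of a $p$-group is $\equiv 1 \pmod{p}$, the paper phrasing the comparison as a congruence contradiction where you phrase it as reading off the exact $p$-adic valuation of $\psi(P)+p-1$. Your added remarks (the degenerate case $\Omega_1(P)=P$ and the fact that neither $|P|=|Q|$ nor $CP_2$ for the quotients is needed) are accurate but do not change the substance of the argument.
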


\begin{proof}

Suppose that  $\Omega_{1}(P)=N$ and $\Omega_{1}(Q)=M$. If $|N|=p^r$ and $|M|=p^t$, then it follows from previous lemma that $p^{r+1}\psi(\frac{P}{N})=p^{t+1}\psi(\frac{Q}{M})$. If $r+1<t+1$, then $p^{r+1}\psi(\frac{P}{N})\equiv 0\ (\ mod \
p^{t+1})$. Since $\frac{P}{N}$ is a $p-$group, we have
$\psi(\frac{P}{N})=1+kp$ and so $\psi(\frac{P}{N})=1+kp\equiv 1\equiv 0\ (\ mod \ p)$, a contradiction. Thus $r=t$.
 \end{proof}

\begin{lem}\label{d}
We have $\Omega_{i}(\frac{P}{\Omega_{1}(P)})=\frac{\Omega_{i+1}(P)}{\Omega_{1}(P)}$ for all $i\in\mathbb{N}$.
\end{lem}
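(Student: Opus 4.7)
Write $N=\Omega_1(P)$. Since $P\in CP_2$, Theorem \ref{a} tells us that for every $j\in\mathbb{N}$,
\[
\Omega_j(P)=\{x\in P:x^{p^j}=1\},
\]
so in particular $N$ has exponent $p$ (as a set, not merely as a subgroup generated by such elements). This is the crucial input; the rest of the argument is an elementwise translation between $P$ and $P/N$.

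The plan is to identify the generating set of $\Omega_i(P/N)$ with the coset image of $\Omega_{i+1}(P)$. A coset $gN$ lies in the generating set of $\Omega_i(P/N)$ precisely when $(gN)^{p^i}=N$, i.e. $g^{p^i}\in N$. I would then use the exponent-$p$ property of $N$ in both directions:
\begin{itemize}
\item If $g^{p^i}\in N$, then $g^{p^{i+1}}=(g^{p^i})^p=1$, so $g\in\Omega_{i+1}(P)$ by Theorem \ref{a}.
\item Conversely, if $g\in\Omega_{i+1}(P)$, then $g^{p^{i+1}}=1$, so $g^{p^i}$ has order dividing $p$ and hence lies in $\Omega_1(P)=N$.
\end{itemize}
Thus the set $\{gN:(gN)^{p^i}=N\}$ coincides with $\Omega_{i+1}(P)/N$, which is already a subgroup of $P/N$.

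Since $\Omega_i(P/N)$ is by definition the subgroup generated by this set, and the set itself is a subgroup, we conclude
\[
\Omega_i(P/N)=\Omega_{i+1}(P)/N,
\]
as desired. I do not expect any real obstacle here: the only subtlety is that $\Omega_i$ is defined as a generated subgroup rather than a set, but Theorem \ref{a} immediately removes this distinction for $P$, and the computation above shows the corresponding set in $P/N$ is already closed under the group operation, so no auxiliary verification (such as showing $P/N\in CP_2$) is required.
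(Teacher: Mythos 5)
Your proof is correct and follows essentially the same computation as the paper's: the key step in both directions is that $g^{p^{i}}\in\Omega_{1}(P)$ if and only if $g^{p^{i+1}}=1$, using $exp(\Omega_{1}(P))=p$ via Theorem \ref{a}. You are in fact slightly more careful than the paper, whose proof takes an arbitrary $t\Omega_{1}(P)\in\Omega_{i}(\frac{P}{\Omega_{1}(P)})$ and immediately asserts $t^{p^{i}}\in\Omega_{1}(P)$, tacitly ignoring that $\Omega_{i}$ is defined as a generated subgroup; your observation that the generating set is exactly $\frac{\Omega_{i+1}(P)}{\Omega_{1}(P)}$, which is already a subgroup, closes that small gap cleanly.
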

\begin{proof} Since  $\frac{\Omega_{i+1}(P)}{\Omega_{1}(P)}\leq \Omega_{i}(\frac{P}{\Omega_{1}(P)})$, it is enough to show that
  $ \Omega_{i}(\frac{P}{\Omega_{1}(P)})\leq \frac{\Omega_{i+1}(P)}{\Omega_{1}(P)}$. Suppose that $t\Omega_{1}(P) \in
  \Omega_{i}(\frac{P}{\Omega_{1}(P)})$. Then $t^{p^{i}}\in\Omega_{1}(P)$ and since $exp(\Omega_{1}(P))=p$, we have
   $t^{p^{i+1}}=1$. Therefore  $t\in \Omega_{i+1}(P)$ and so
  $t\Omega_{1}(P)\in\frac{\Omega_{i+1}(P)}{\Omega_{1}(P)}$.

 \end{proof}
\begin{cor}\label{e}  $\frac{P}{\Omega_{1}(P)}$ belongs to $CP_2$.
\end{cor}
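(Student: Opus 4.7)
The plan is to apply the characterization in Theorem \ref{a}(1) in both directions: use that $P \in CP_2$ to describe torsion in $P$, then verify the same characterization for the quotient $\bar{P} := P/\Omega_1(P)$. Concretely, by Theorem \ref{a}(1) applied to $P$, we have $\Omega_n(P) = \{x \in P \mid x^{p^n}=1\}$ for every $n$; in particular $\exp(\Omega_1(P)) = p$, a fact already used inside the proof of Lemma \ref{d}.

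To show $\bar{P} \in CP_2$ I would verify Theorem \ref{a}(1) for $\bar{P}$, i.e., that for every $n \in \mathbb{N}$ the set $\{y\Omega_1(P) \in \bar{P} \mid (y\Omega_1(P))^{p^n} = \Omega_1(P)\}$ coincides with $\Omega_n(\bar{P})$. Pick $y \in P$; then $(y\Omega_1(P))^{p^n} = \Omega_1(P)$ iff $y^{p^n} \in \Omega_1(P)$, and since $\exp(\Omega_1(P)) = p$ this happens iff $y^{p^{n+1}} = 1$. Applying Theorem \ref{a}(1) to $P$ again, this is equivalent to $y \in \Omega_{n+1}(P)$. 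Hence the set in question is exactly $\Omega_{n+1}(P)/\Omega_1(P)$, which by Lemma \ref{d} equals $\Omega_n(\bar{P})$.

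Thus $\bar{P}$ satisfies the criterion of Theorem \ref{a}(1) and therefore belongs to $CP_2$. The argument is essentially a bookkeeping exercise: the only nontrivial ingredients are Lemma \ref{d} (for the identification of $\Omega_n(\bar{P})$) and the fact that $\Omega_1(P)$ has exponent $p$ (from $P \in CP_2$), so I do not foresee any real obstacle beyond assembling these observations carefully.
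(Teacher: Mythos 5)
Your argument is correct and is essentially the paper's own proof: both verify the criterion of Theorem \ref{a}(1) for the quotient by showing that the set of cosets killed by $p^{n}$ equals $\Omega_{n+1}(P)/\Omega_{1}(P)$, and then invoke Lemma \ref{d} to identify this with $\Omega_{n}\bigl(P/\Omega_{1}(P)\bigr)$. Your write-up just spells out the equivalence $y^{p^{n}}\in\Omega_{1}(P)\iff y^{p^{n+1}}=1$ a bit more explicitly than the paper does.
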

\begin{proof}
   It follows from previous lemma that $\Omega_{i}(\frac{P}{\Omega_{1}(P)})=\frac{\Omega_{i+1}(P)}{\Omega_{1}(P)}$, for all $i\in \mathbb{N}$. Since $P$  belongs to  $CP_{2}$, we have $\Omega_{i+1}(P)=\{x\in G\ | \ x^{p^{i+1}}=1\}$  for all $i\in \mathbb{N}$. Since $x^{p^{i}}\in \Omega_{1}(P)$, we see $\frac{\Omega_{i+1}(P)}{\Omega_{1}(P)}=\{x\Omega_{1}(P)\in \frac{P}{\Omega_{1}(P)}\ | \ x^{p^{i}}\Omega_{1}(P)=\Omega_{1}(P)\}$  for all $i\in \mathbb{N}$ by Theorem \ref{a}  and so   $\frac{P}{\Omega_{1}(P)}$ is contained in $CP_{2}$.
  \end{proof}

\begin{thm}\label{j} Let  $P$ and $Q$ have the same order $p^{n}$ and the same exponent $p^{m}$ and suppose that  $|\Omega_{m-i}(P)|=|\Omega_{m-i}(Q)|$ for $i=0,1,2,..,t$. If $|\Omega_{m-t-1}(P)|<|\Omega_{m-t-1}(Q)|$, then $\psi(P)>\psi(Q)$.
\end{thm}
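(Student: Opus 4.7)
The plan is to pass to a closed-form expression for $\psi(G)$ in terms of the sizes $|\Omega_i(G)|$, use the equality hypotheses to cancel the high-index contributions to $\psi(P)-\psi(Q)$, and then bound the surviving block from below.

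Because $P,Q\in CP_2$ with exponent $p^m$, Theorem~\ref{a} gives $\Omega_i(G)=\{x\in G:x^{p^i}=1\}$ for $G\in\{P,Q\}$. Hence the number of elements of order exactly $p^i$ in $G$ is $|\Omega_i(G)|-|\Omega_{i-1}(G)|$ (with the convention $|\Omega_0(G)|:=1$), and a telescoping/Abel summation yields
\[
\psi(G)=p^m|G|+(1-p)-(p-1)\sum_{i=1}^{m-1}p^i|\Omega_i(G)|.
\]
Since $|P|=|Q|=p^n$, subtracting gives
\[
\psi(P)-\psi(Q)=(p-1)\sum_{i=1}^{m-1}p^i\bigl(|\Omega_i(Q)|-|\Omega_i(P)|\bigr).
\]

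The equalities $|\Omega_{m-i}(P)|=|\Omega_{m-i}(Q)|$ for $i=0,\ldots,t$ annihilate all summands of index $j\geq m-t$. Writing $k:=m-t-1$ (necessarily $k\geq 1$, since at $k=0$ both $|\Omega_0|$ equal $1$, contradicting the strict inequality), the identity reduces to
\[
\psi(P)-\psi(Q)=(p-1)\sum_{i=1}^{k}p^i\bigl(|\Omega_i(Q)|-|\Omega_i(P)|\bigr).
\]
Both $|\Omega_k(P)|$ and $|\Omega_k(Q)|$ are powers of $p$, so the strict inequality $|\Omega_k(Q)|>|\Omega_k(P)|$ forces $|\Omega_k(Q)|\geq p\,|\Omega_k(P)|$; the $i=k$ summand therefore contributes at least $(p-1)^2 p^k |\Omega_k(P)|$. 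For $i<k$, the inclusion $\Omega_i(P)\leq\Omega_k(P)$ gives $|\Omega_i(P)|\leq|\Omega_k(P)|$, so $|\Omega_i(Q)|-|\Omega_i(P)|\geq-|\Omega_k(P)|$; summing the resulting geometric series shows the tail contributes at least $-|\Omega_k(P)|(p^k-p)$. Adding,
\[
\psi(P)-\psi(Q)\geq|\Omega_k(P)|\bigl[p^{k+1}(p-2)+p\bigr]>0,
\]
with the bound tight (equal to $2|\Omega_k(P)|$) at $p=2$ and clearly positive for $p\geq 3$.

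The main obstacle is that the summands at $i<k$ can be individually negative and, absent a quantitative estimate, could a priori overwhelm the positive $i=k$ term. The resolution is that the jump at $i=k$ is necessarily multiplicative---at least a factor of $p$---because both $\Omega$-sizes are powers of the same prime, while the negative tail only grows geometrically in $p^k$; these two geometric quantities just barely leave a positive residue.
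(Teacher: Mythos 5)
Your argument is correct, and every estimate in it checks out, including the delicate case $p=2$, where your lower bound degenerates to $2|\Omega_k(P)|$ but stays positive. It is, however, organized differently from the paper's proof. The paper does not write the closed form $\psi(G)=p^m|G|+(1-p)-(p-1)\sum_{i=1}^{m-1}p^i|\Omega_i(G)|$ in this argument; instead it iterates the peeling identity of Lemma \ref{h} to strip off the top $t$ layers, whose contributions cancel by the hypothesis $|\Omega_{m-i}(P)|=|\Omega_{m-i}(Q)|$, thereby reducing the claim to $\psi(\Omega_{m-t}(P))>\psi(\Omega_{m-t}(Q))$. One further application of Lemma \ref{h} at level $m-t$ isolates the jump term $p^{m-t}\bigl(|\Omega_{m-t-1}(Q)|-|\Omega_{m-t-1}(P)|\bigr)$; the paper then simply discards $\psi(\Omega_{m-t-1}(P))>0$ and absorbs $\psi(\Omega_{m-t-1}(Q))$ into that jump via the crude bound $\psi(H)\le \exp(H)\,|H|\le p^{m-t-1}|\Omega_{m-t-1}(Q)|$, so it never has to examine anything below level $m-t-1$. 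Your route instead expands everything into the signed sum $(p-1)\sum_{i=1}^{k}p^i\bigl(|\Omega_i(Q)|-|\Omega_i(P)|\bigr)$ with $k=m-t-1$ and wins by playing the guaranteed factor-of-$p$ jump at level $k$ (both $\Omega$-sizes being powers of $p$) against a worst-case geometric tail from the lower levels. Both proofs rest on the same structural input --- in $CP_2$ the sets $\Omega_i$ stratify the group by element order, so $\psi$ is determined by the chain of $\Omega$-sizes --- but yours buys an explicit quantitative gap $\psi(P)-\psi(Q)\ge|\Omega_k(P)|\bigl(p^{k+1}(p-2)+p\bigr)$ at the cost of having to control all the lower layers at once, which the paper's recursive bookkeeping avoids.
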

\begin{proof}
If $P\in CP_{2}$ and $exp(P)=p^{m}$, then for all $i<m$, $\Omega_{i}(P)\neq P$ and
$$1<\Omega_{1}(P)<\Omega_{2}(P)<...<\Omega_{m}(P)=P.$$

Note that for all $i\leq j$, we have $\Omega_{i}(\Omega_{j}(P))=\Omega_{i}(P)$.
Using Lemma \ref{h} we can get
$$\psi(P)=\psi(\Omega_{m-t}(P))+\sum_{i=1}^{t}|\Omega_{m-i}(P)|p^{m-i+1}(\frac{|\Omega_{m-i+1}(P)|}{|\Omega_{m-i}(P)|}-1)$$ and
$$\psi(Q)=\psi(\Omega_{m-t}(Q))+\sum_{i=1}^{t}|\Omega_{m-i}(Q)|p^{m-i+1}(\frac{|\Omega_{m-i+1}(Q)|}{|\Omega_{m-i}(Q)|}-1).$$
Since
$$\sum_{i=1}^{t}|\Omega_{m-i}(P)|p^{m-i+1}(\frac{|\Omega_{m-i+1}(P)|}{|\Omega_{m-i}(P)|}-1)$$

$$=\sum_{i=1}^{t}|\Omega_{m-i}(Q)|p^{m-i+1}(\frac{|\Omega_{m-i+1}(P)|}{|\Omega_{m-i}(Q)|}-1),$$
 it is enough to prove that $\psi(\Omega_{m-t}(P))>\psi(\Omega_{m-t}(Q))$. Suppose that $|\Omega_{m-t-1}(Q)|=p^{a}|\Omega_{m-t-1}(P)|$, where $a\geq 1$. By Lemma \ref{h}, we have
 \begin{eqnarray*}
\psi(\Omega_{m-t}(P))-\psi(\Omega_{m-t}(Q))\\
&=&\psi(\Omega_{m-t-1}(P))-\psi(\Omega_{m-t-1}(Q))\\
&+&p^{m-t}(|\Omega_{m-t-1}(Q)|-|\Omega_{m-t-1}(P)|)\\
&> &p^{m-t}(|\Omega_{m-t-1}(Q)|-|\Omega_{m-t-1}(P)|)-\psi(\Omega_{m-t-1}(Q))\\
&=&p^{m-t-a}|\Omega_{m-t-1}(Q)|(p^{a}-1)-\psi(\Omega_{m-t-1}(Q))\\
&\geq&p^{m-t-1}(p-1)|\Omega_{m-t-1}(Q)|-\psi(\Omega_{m-t-1}(Q))\\
&\geq& \psi(\Omega_{m-t-1}(Q))-\psi(\Omega_{m-t-1}(Q))=0.
 \end{eqnarray*}
This completes the proof.
\end{proof}

Using Lemmas \ref{b}  and \ref{c}  we can propose another proof for Corollary 6 in \cite{T}.

\begin{cor} \label{g}Let $P$ and $Q$  be  abelian $p-$groups of the same
 order.  Then $\psi(P)=\psi(Q)$ if and only if $P\cong Q$.
\end{cor}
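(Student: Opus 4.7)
The proof splits into the (trivial) ``if'' direction and the ``only if'' direction; for the latter I plan to use induction on $|P|=|Q|=p^n$, with Lemmas \ref{b} and \ref{c} doing the work. The base case $n=0$ is trivial. For the inductive step, note first that every abelian $p$-group belongs to $CP_2$: in an abelian group $o(xy)$ divides $\mathrm{lcm}(o(x),o(y))$, and in a $p$-group this lcm equals $\max\{o(x),o(y)\}$. Hence Lemmas \ref{b} and \ref{c} apply to $P$ and $Q$.

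Assuming $\psi(P)=\psi(Q)$, Lemma \ref{c} gives $|\Omega_{1}(P)|=|\Omega_{1}(Q)|=:p^{r}$, and then Lemma \ref{b} forces
\[
\psi\!\left(\tfrac{P}{\Omega_{1}(P)}\right)=\psi\!\left(\tfrac{Q}{\Omega_{1}(Q)}\right).
\]
The two quotients are abelian $p$-groups of the same order $p^{n-r}<p^{n}$, so the induction hypothesis yields $P/\Omega_{1}(P)\cong Q/\Omega_{1}(Q)$.

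The remaining (and only substantive) step is to reconstruct the isomorphism type of $P$ from the pair $\bigl(|\Omega_{1}(P)|,\, P/\Omega_{1}(P)\bigr)$. Write $P\cong \prod_{j=1}^{k}\Z/p^{a_{j}}\Z$ with $a_{1}\ge\cdots\ge a_{k}\ge1$. Then $\Omega_{1}(P)\cong(\Z/p\Z)^{k}$, so the integer $k=r$ is recovered from $|\Omega_{1}(P)|$; and
\[
\tfrac{P}{\Omega_{1}(P)}\;\cong\;\prod_{a_{j}>1}\Z/p^{a_{j}-1}\Z,
\]
so the multiset $\{a_{j}-1 : a_{j}>1\}$ is recovered from the isomorphism type of $P/\Omega_{1}(P)$ by the uniqueness of invariant factors. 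These two data determine the number $s$ of indices with $a_{j}=1$ (namely $s=k-\#\{a_{j}>1\}$) and the remaining invariants $a_{j}>1$ by adding $1$, hence the full multiset $\{a_{1},\dots,a_{k}\}$. The same reconstruction applied to $Q$ yields the same multiset, so $P\cong Q$.

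The only point that requires any care is the last paragraph: one has to observe that the passage $P\mapsto P/\Omega_{1}(P)$ is not quite ``subtract one from every invariant'' because factors with $a_{j}=1$ disappear, and one must recover the number of such vanishing factors from $r$. Once that bookkeeping is in place, the argument is entirely mechanical.
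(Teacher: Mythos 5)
Your proof is correct and follows essentially the same route as the paper: induction on $|P|$, using Lemma \ref{b} to reduce $\psi(P)=\psi(Q)$ to $\psi(P/\Omega_{1}(P))=\psi(Q/\Omega_{1}(Q))$ after Lemma \ref{c} equalizes $|\Omega_{1}(P)|$ and $|\Omega_{1}(Q)|$. The only difference is that you spell out the final reconstruction of the invariant factors of $P$ from $|\Omega_{1}(P)|$ and $P/\Omega_{1}(P)$ (including the bookkeeping for the factors $\Z/p\Z$ that vanish in the quotient), a step the paper simply asserts; this is a welcome addition rather than a deviation.
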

\begin{proof}
 It is sufficient to show that if $\psi(P)=\psi(Q)$, then $P \cong Q$. We prove this by induction on $|P|$. Base step of induction is trivial. Let   $|\Omega_{1}(P)|=p^{t}$ and
$|\Omega_{1}(Q)|=p^{r}$. It follows from Lemma \ref{b}  that

$\psi(P)=1-p+p^{r+1}\psi(\frac{P}{\Omega_{1}(P)})$ and
$\psi(Q)=1-p+p^{t+1}\psi(\frac{Q}{\Omega_{1}(Q)}) $. We have $r=t$ by  Lemma \ref{c} . Therefore $\psi(\frac{P}{\Omega_{1}(P)})=\psi(\frac{Q}{\Omega_{1}(Q)})$.  So we have $\frac{P}{\Omega_{1}(P)}\cong
\frac{Q}{\Omega_{1}(Q)}$ by induction hypothesis which implies that  $P\cong Q$.

 \end{proof}

The above  result is not true for regular $p-$groups or $p-$ groups of nilpotent class 2. For example there exists regular $3-$group $P$ such that  $|P|=27$ and $exp(P)=3$, but $P$ is not abelian, so $\psi(P)=79=\psi((C_{3})^{3})$ but $P$ is not isomorphic to $(C_{3})^{3}$.

Now we are ready to prove Theorem 1.1.

  \begin{thm}\label{f}
Suppose that  $P$ and $Q$ have the same order. Then the following statements are equivalent:
   \begin{enumerate}
     \item  $\psi(P)=\psi(Q)$.
     \item $|\Omega_{i}(P)|=|\Omega_{i}(Q)|$ for all $i\in \mathbb{N}$.
     \item $\psi(\Omega_{i}(P))=\psi(\Omega_{i}(Q))$ for all  $i\in \mathbb{N}$.
   \end{enumerate}

\end{thm}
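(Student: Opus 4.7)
The plan is to prove the cycle $(3)\Rightarrow(1)\Rightarrow(2)\Rightarrow(3)$. The implication $(3)\Rightarrow(1)$ is immediate: since $P,Q$ are $p$-groups, $\Omega_i(P)=P$ and $\Omega_i(Q)=Q$ for any $i$ exceeding the exponents, so $(3)$ applied to such an $i$ yields $(1)$.

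For $(2)\Rightarrow(3)$, I would exploit Theorem \ref{a}(1), which in a $CP_2$ $p$-group identifies $\Omega_i(P)$ with the set of elements killed by $p^i$. Hence the number of elements of order exactly $p^j$ in $\Omega_i(P)$ equals $|\Omega_j(P)|-|\Omega_{j-1}(P)|$ for $1\le j\le i$ (with $\Omega_0(P)=1$), and therefore
\begin{equation*}
\psi(\Omega_i(P))=1+\sum_{j=1}^{i}p^{j}\bigl(|\Omega_j(P)|-|\Omega_{j-1}(P)|\bigr).
\end{equation*}
The analogous formula holds for $Q$, so $(2)$ forces termwise equality and hence $(3)$.

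The main content is $(1)\Rightarrow(2)$, which I would prove by induction on $|P|$. The base case $|P|=1$ is vacuous. For the inductive step, assume $\psi(P)=\psi(Q)$. Lemma \ref{c} gives $|\Omega_1(P)|=|\Omega_1(Q)|=p^r$, and then Lemma \ref{b} yields
\begin{equation*}
1-p+p^{r+1}\psi\bigl(P/\Omega_1(P)\bigr)=\psi(P)=\psi(Q)=1-p+p^{r+1}\psi\bigl(Q/\Omega_1(Q)\bigr),
\end{equation*}
so $\psi(P/\Omega_1(P))=\psi(Q/\Omega_1(Q))$. By Corollary \ref{e}, both quotients lie in $CP_2$, and they have the same order $p^{n-r}<p^n$. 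The induction hypothesis applies and gives $|\Omega_i(P/\Omega_1(P))|=|\Omega_i(Q/\Omega_1(Q))|$ for every $i$. Translating through Lemma \ref{d}, which identifies these with $|\Omega_{i+1}(P)|/|\Omega_1(P)|$ and $|\Omega_{i+1}(Q)|/|\Omega_1(Q)|$ respectively, and using $|\Omega_1(P)|=|\Omega_1(Q)|$, I obtain $|\Omega_{i+1}(P)|=|\Omega_{i+1}(Q)|$ for all $i\ge 0$. Combined with the $i=1$ case already established, this is exactly $(2)$.

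The conceptual obstacle has already been absorbed into Lemma \ref{c} (a mod-$p$ parity argument) and Corollary \ref{e} (stability of $CP_2$ under quotienting by $\Omega_1$); once these are in place the induction is essentially bookkeeping. The only thing to watch is the bijective correspondence between the $\Omega$-series of $P/\Omega_1(P)$ and the shifted $\Omega$-series of $P$, which is precisely Lemma \ref{d} and requires that elements of $\Omega_1(P)$ have order dividing $p$ — a fact that rests on the $CP_2$ hypothesis via Theorem \ref{a}.
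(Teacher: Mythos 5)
Your proof is correct, and its two substantive implications coincide with the paper's: the induction on $|P|$ via Lemmas \ref{b}, \ref{c}, \ref{d} and Corollary \ref{e} for $(1)\Rightarrow(2)$, and the counting formula $\psi(\Omega_i(P))=1+\sum_{j=1}^{i}p^{j}(|\Omega_j(P)|-|\Omega_{j-1}(P)|)$ (valid because $CP_2$ makes $\Omega_j$ the exact set of elements killed by $p^j$) for $(2)\Rightarrow(3)$. The only real difference is organizational: the paper proves four implications, handling $(2)\Rightarrow(1)$ by the same counting formula and $(3)\Rightarrow(2)$ by repeatedly peeling off $\Omega_{m-1}$ via Lemma \ref{h}, whereas you close a three-step cycle with the trivial observation that $(3)$ applied to large $i$ gives $(1)$ outright. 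This is a small but genuine economy — it dispenses with the Lemma \ref{h} argument (and its implicit appeal to equal exponents) entirely, at no cost in generality.
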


\begin{proof}

$(1)\Rightarrow (2)$. We prove by induction on $|P|$. Suppose that $P$ and $Q$ are contained in  $CP_{2}$ and  $\psi(P)=\psi(Q)$. It follows from Lemma  \ref{b}  that $\psi(P)=1-p+p^{r+1}\psi(\frac{P}{\Omega_{1}(P)})$ and
$\psi(Q)=1-p+p^{t+1}\psi(\frac{Q}{\Omega_{1}(Q)})$ where
$|\Omega_{1}(P)|=p^{r}$ and $|\Omega_{1}(Q)|=p^{t}$. Since $\psi(P)=\psi(Q)$, we obtain $\  r=t$  by Lemma \ref{c}  and so
$\psi(\frac{P}{\Omega_{1}(P)})=\psi(\frac{Q}{\Omega_{1}(Q)})$. By corollary \ref{e}  we have $\frac{P}{\Omega_{1}(P)}$ and $\frac{Q}{\Omega_{1}(Q)}$ are in  $CP_{2}$. Since $|\frac{P}{\Omega_{1}(P)}|=|\frac{Q}{\Omega_{1}(Q)}|$,  the induction assumption yields that $|\Omega_{i}(\frac{P}{\Omega_{1}(P)})|=|\Omega_{i}(\frac{Q}{\Omega_{1}(Q)})|$ for all $i\in \mathbb{N}$. Therefore $|\Omega_{i}(P)|=|\Omega_{i}(Q)|$ by Lemmas 2.3 and 2.4.

$(2)\Rightarrow (1)$. Let $exp(P)=p^m$. By Theorem \ref{i}, we have  $exp(Q)=p^m$. Since  $P$ and $Q$ are contained in  $CP_{2}$,  we have $exp(\Omega_{j}(P))=exp(\Omega_{j}(Q))=p^j$  for all $j\in \mathbb{N}$. But

      \begin{eqnarray*}
      \psi(P)&=&1+\sum_{j=1}^{m}(|\Omega_{j}(P)|-|\Omega_{j-1}(P))|p^j\\
      &=&1+\sum_{j=1}^{m}(|\Omega_{j}(Q)|-|\Omega_{j-1}(Q)|)p^j=\psi(Q),
      \end{eqnarray*}
where the second equality holds by the hypothesis (2).

$(2)\Rightarrow (3)$.    Since  $|\Omega_{i}(P)|=|\Omega_{i}(Q)|$ for all $i\in \mathbb{N}$,  we have $exp(P)=exp(Q)$. Let $exp(P)=p^m$. Since $P$ and $Q$ are contained in  $CP_{2}$, we have
$exp(\Omega_{j}(P))=exp(\Omega_{j}(Q))=p^j$  for all $j\in \mathbb{N}$. So

      \begin{eqnarray*}
      \psi(\Omega_{i}(P))&=&1+\sum_{j=1}^{i}(|\Omega_{j}(P)|-|\Omega_{j-1}(P))|p^j\\
      &=&1+\sum_{j=1}^{i}(|\Omega_{j}(Q)|-|\Omega_{j-1}(Q)|)p^j=\psi(\Omega_{i}(Q)).
      \end{eqnarray*}

      $(3)\Rightarrow (2)$. Since $\psi(\Omega_{i}(P))=\psi(\Omega_{i}(Q))$ for all  $i\in \mathbb{N}$, we have $exp(P)=exp(Q)=p^m$. Let  $M=\Omega_{m-1}(P)$ and $N=\Omega_{m-1}(Q)$. By  Lemma \ref{h} we have  $$\psi(P)=\psi(M)+|M|p^{m}(\frac{|P|}{|M|}-1)=\psi(N)+|N|p^{m}(\frac{|Q|}{|N|}-1)=\psi(Q).$$

Since $\psi(M)=\psi(N)$,  we obtain that $|N|=|M|$. By repeated use of this technique we shall reach the claimed.
This completes the proof.
 \end{proof}
Finally we prove the last main result.
\begin{thm} \label{m}Let $P$ and $Q$ have the same order $p^{n}$. Then $\psi(P)=\psi(Q)$ if and only if there is a bijection $f:P\rightarrow Q$ such that $o(f(x))=o(x)$ for all $x\in P$.
\end{thm}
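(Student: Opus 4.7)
The plan is to reduce the claim to a statement about the order distribution of elements in $P$ and $Q$. The backward direction is immediate: if a bijection $f:P\to Q$ with $o(f(x))=o(x)$ for all $x\in P$ exists, then
$$\psi(P)=\sum_{x\in P}o(x)=\sum_{x\in P}o(f(x))=\sum_{y\in Q}o(y)=\psi(Q).$$

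For the forward direction, I would first invoke Theorem \ref{f}: the equality $\psi(P)=\psi(Q)$ forces $|\Omega_{i}(P)|=|\Omega_{i}(Q)|$ for every $i\in\mathbb{N}$. Since $P,Q\in CP_{2}$, Theorem \ref{a}(1) says that as a \emph{set} $\Omega_{i}(P)=\{x\in P\mid x^{p^{i}}=1\}$, and similarly for $Q$; thus $|\Omega_{i}(P)|$ is exactly the number of elements in $P$ whose order divides $p^{i}$.

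From here, let $N_{i}(P)$ denote the number of elements of $P$ of order exactly $p^{i}$, and analogously $N_{i}(Q)$. Telescoping along the chain $\{1\}=\Omega_{0}(P)\leq\Omega_{1}(P)\leq\cdots\leq\Omega_{m}(P)=P$ and using the set-theoretic description above, we obtain
$$N_{i}(P)=|\Omega_{i}(P)|-|\Omega_{i-1}(P)|=|\Omega_{i}(Q)|-|\Omega_{i-1}(Q)|=N_{i}(Q)$$
for every $i\geq 0$. Hence $P$ and $Q$ have the same number of elements of each possible order.

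The desired bijection can now be manufactured componentwise: for each $i$, fix any bijection between the set of elements of order $p^{i}$ in $P$ and the set of elements of order $p^{i}$ in $Q$ (permissible since these sets have equal cardinality by the previous step); the disjoint union of these pairings over all $i\geq 0$ yields a bijection $f:P\to Q$ with $o(f(x))=o(x)$ for every $x\in P$. The principal obstacle here was already resolved in Theorem \ref{f}, which translated the single numerical datum $\psi(P)=\psi(Q)$ into the much stronger level-by-level equalities $|\Omega_{i}(P)|=|\Omega_{i}(Q)|$; once that is in hand, the present argument is purely combinatorial.
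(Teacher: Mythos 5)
Your proof is correct and rests on the same foundation as the paper's: Theorem \ref{f} converts $\psi(P)=\psi(Q)$ into $|\Omega_{i}(P)|=|\Omega_{i}(Q)|$ for all $i$, and the $CP_{2}$ hypothesis (via Theorem \ref{a}) identifies $\Omega_{i}$ with the set of elements of order dividing $p^{i}$, so the counts of elements of each exact order agree and the bijection is assembled stratum by stratum. The only difference is packaging: the paper runs an induction on $n$, peeling off the elements of maximal order at each step, whereas you telescope the whole $\Omega$-filtration at once, which is arguably cleaner.
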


\begin{proof}
It is clear that if  there is a bijection $f:P\rightarrow Q$ such that $o(f(x))=o(x)$ for all $x\in P$, then $\psi(P)=\psi(Q)$. Conversely suppose that $\psi(P)=\psi(Q)$. We proceed by induction on $n$. Base step is trivial. By Theorem \ref{i}
we have $exp(P)=exp(Q)=p^m$. It follows from Theorem \ref{f} that $\psi(\Omega_{m-1}(P))=\psi(\Omega_{m-1}(Q))$ and so by inductive hypothesis  there is a bijection $f:\Omega_{m-1}(P)\rightarrow \Omega_{m-1}(Q)$ such that $o(f(x))=o(x)$ for all $x\in \Omega_{m-1}(P)$. Theorem   \ref{f} follows that $|\Omega_{m}(P)|-|\Omega_{m-1}(P)|=|\Omega_{m}(Q)|-|\Omega_{m-1}(Q)|$ and hence there is a bijection $g$ from $\Omega_{m}(P)-\Omega_{m-1}(P)$ to
$\Omega_{m}(Q)-\Omega_{m-1}(Q)$. Define $h$ from $P$
to $Q$ by $$h(x)=
  \begin{cases}
    f(x) & \text{ $x\in \Omega_{m-1}(P) $}, \\
    g(x) & \text{otherwise}.
  \end{cases}$$
It is easily seen that $h$ is a bijection from $P$ to $Q$ such that $o(h(x))=o(x)$ for all $x\in P$, as wanted.
\end{proof}

Department of Mathematics, Faculty of Sciences, University of
Zanjan, Zanjan, Iran

E-mail addresses:

 sm$_{-}$jafarian@znu.ac.ir

 m.amiri77@gmail.com
\end{document}